
\documentclass{amsart}

\usepackage{amsmath,amssymb,amsthm}

\hyphenation{mani-fold mani-folds sub-mani-fold homo-logy
sub-mani-folds topo-logy self-in-ter-sec-tion pluri-sub-har-monic
pa-ram-e-tri-sa-tion aspheri-cal}

\newtheorem{prop}{Proposition}
\newtheorem{thm}[prop]{Theorem}
\newtheorem{lem}[prop]{Lemma}

\theoremstyle{definition}

\newtheorem{rem}[prop]{Remark}


\def\co{\colon\thinspace}

\newcommand{\N}{\mathbb{N}}

\newcommand{\PDM}{\mathrm{PD}_M}
\newcommand{\PDS}{\mathrm{PD}_{\Sigma}}
\newcommand{\PKW}{\Phi_{\mathrm{KW}}}

\newcommand{\R}{\mathbb R}

\newcommand{\Z}{\mathbb Z}


\begin{document}

\author[H. Geiges]{Hansj\"org Geiges}
\address{Mathematisches Institut, Universit\"at zu K\"oln,
Weyertal 86--90, 50931 K\"oln, Germany}
\email{geiges@math.uni-koeln.de}
\author[K. Sporbeck]{Kevin Sporbeck}
\author[K. Zehmisch]{Kai Zehmisch}
\address{Fakult\"at f\"ur Mathematik, Ruhr-Universit\"at Bochum,
Universit\"atsstra{\ss}e 150, 44780 Bochum, Germany}
\email{kevin.sporbeck@rub.de, kai.zehmisch@rub.de}

\title[Subcritical polarisations]{Subcritical polarisations of
symplectic manifolds have degree one}

\date{}

\begin{abstract}
We show that if the complement of a Donaldson hypersurface
in a closed, integral symplectic manifold has
the homology of a subcritical Stein manifold, then
the hypersurface is of degree one. In particular, this
demonstrates a conjecture by Biran and Cieliebak on subcritical
polarisations of symplectic manifolds. Our proof is based on a
simple homological argument using ideas of Kulkarni--Wood.
\end{abstract}

\subjclass[2020]{53D35, 57R17, 57R19, 57R95}
\thanks{This research is part of a project in the SFB/TRR 191
\textit{Symplectic Structures in Geometry, Algebra and Dynamics}, 
funded by the DFG (Project-ID 281071066 -- TRR 191)}

\maketitle


\section{Donaldson hypersurfaces and symplectic polarisations}
Let $(M,\omega)$ be a closed, connected, integral symplectic manifold,
that is, the de Rham cohomology class $[\omega]_{\mathrm{dR}}$ lies in
the image of the homomorphism
$H^2(M)\rightarrow H^2_{\mathrm{dR}}(M)=H^2(M;\R)$ induced by the
inclusion $\Z\rightarrow\R$. The cohomology classes
in $H^2(M)$ mapping to $[\omega]_{\mathrm{dR}}$ are called
\emph{integral lifts}, and by abuse of notation we shall
write $[\omega]$ for any such lift.
Following McDuff and Salamon~\cite[Section~14.5]{mcsa17}, we call
a codimension~$2$ symplectic submanifold $\Sigma\subset M$
a \textbf{Donaldson hypersurface} if it is Poincar\'e dual
to $d[\omega]\in H^2(M)$ for some integral lift $[\omega]$
and some (necessarily positive) integer~$d$. Donaldson~\cite{dona96}
has established the existence of such hypersurfaces for any
sufficiently large~$d$.

The pair $(M,\Sigma)$ is called a \textbf{polarisation} of $(M,\omega)$,
and the number $d\in\N$, the \textbf{degree} of the polarisation.
Biran and Cieliebak~\cite{bici01} studied these polarisations in the
K\"ahler case, where $\omega$ admits a compatible
\emph{integrable} almost complex structure~$J$. In that setting,
the complement $(M\setminus\Sigma,J)$ admits in a natural way
the structure of a Stein manifold.

As shown recently by Giroux~\cite{giro17}, building on work
of Cieliebak--Eliashberg, even in the non-K\"ahler
case the complement of a symplectic hypersurface
$\Sigma\subset M$ found by Donaldson's
construction admits the structure of a Stein manifold. Here, of course,
the complex structure on $M\setminus\Sigma$ does not, in general,
extend over~$\Sigma$. Complements of Donaldson hypersurfaces
are also studied in~\cite{dili19}.
\section{Subcritical polarisations}
The focus of Biran and Cieliebak~\cite{bici01} lay on \emph{subcritical}
polarisations of K\"ahler manifolds, which means that $(M\setminus\Sigma,J)$
admits a plurisubharmonic Morse function $\varphi$ all of whose critical
points have, for $\dim M=2n$, index less than~$n$. (They also assumed that
$\varphi$ coincides with the plurisubharmonic function
defining the natural Stein structure outside a compact set
containing all critical points of $\varphi$.)

More generally, McDuff and Salamon~\cite[p.~504]{mcsa17}
propose the study of polarisations $(M,\Sigma)$ where
the complement $M\setminus\Sigma$ is homotopy equivalent
to a subcritical Stein manifold (of finite type).
We relax this condition a little
further and call $(M,\Sigma)$ \textbf{homologically subcritical}
if $M\setminus\Sigma$ has the \emph{homology} of a subcritical Stein
manifold, that is, of a CW-complex containing
finitely many cells up to dimension at most $n-1$.
This means that there is some $\ell\leq n-1$
such that $H_k(M\setminus\Sigma)$
vanishes for $k\geq\ell+1$ and $H_{\ell}(M\setminus\Sigma)$ is torsion-free.

Motivated by the many examples they could construct,
Biran and Cieliebak~\cite[p.~751]{bici01} conjectured that
subcritical polarisations necessarily have degree~$1$.
They suggested an approach to this conjecture using either
symplectic or contact homology. A rough sketch of a proof
along these lines, in the language of symplectic
field theory, was given by
Eliashberg--Givental--Hofer~\cite[p.~661]{egh00}.
A missing assumption $c_1(M\setminus\Sigma)=0$ of that argument
and a few more details --- still short of a complete proof ---
were added by J.~He~\cite[Proposition~4.2]{he13}, who
appeals to Gromov--Witten theory and polyfolds.

Here is our main result, which entails the conjecture of Biran--Cieliebak.

\begin{thm}
\label{thm:main}
Let $(M,\omega)$ be a closed, integral symplectic manifold,
and $\Sigma\subset M$ a compact symplectic submanifold of codimension~$2$,
Poincar\'e dual to the integral cohomology class $d[\omega]$
for some (positive) integer~$d$. If $(M,\Sigma)$ is homologically subcritical,
then $d[\omega]/\mathrm{torsion}$ is indivisible in
$H^2(M)/\mathrm{torsion}$. In particular, $d=1$.
\end{thm}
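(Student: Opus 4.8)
The plan is to exploit the Poincaré–Lefschetz duality relating the homology of the open complement $M \setminus \Sigma$ to the cohomology of the pair $(M, M \setminus \Sigma)$, and to feed into this the cohomological consequence of $\Sigma$ being Poincaré dual to $d[\omega]$. The key mechanism is that capping/cupping with the class $[\omega]$ (or rather the Euler class of the normal bundle of $\Sigma$) should act on homology in a way controlled by the degree $d$, and subcriticality forces enough of this homology to vanish that $d$ has no room to be larger than $1$.

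Let me think about how to set this up concretely.

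Write $U$ for a tubular neighborhood of $\Sigma$, so $M \setminus \Sigma$ deformation retracts onto $M \setminus U$, a compact manifold with boundary $\partial U$, the circle bundle of the normal bundle $\nu$ of $\Sigma$. First I would assemble the long exact sequence of the pair $(M, M \setminus \Sigma)$ together with excision and the Thom isomorphism, which identifies $H^k(M, M \setminus \Sigma)$ with $H^{k-2}(\Sigma)$ via the Thom class. The connecting maps then fit into a Gysin-type sequence for the normal circle bundle. The crucial algebraic input is that, under these identifications, the composite $H^{k-2}(\Sigma) \to H^k(M) \to H^k(\Sigma)$ (restrict the Thom/Euler data) is cup product with the Euler class $e(\nu)$, and since $\mathrm{PD}_M(\Sigma) = d[\omega]$, the restriction of $[\omega]$ to $\Sigma$ represents $e(\nu)/d$ in an appropriate integral sense. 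In the Kulkarni--Wood spirit, the point is to track divisibility: the map on top cohomology sees multiplication by $d$, so if the homology of the complement is concentrated in low degrees, the relevant map must be an isomorphism, pinning $d$ down.

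The cleanest route is probably the following. Using that $(M,\Sigma)$ is homologically subcritical, $H_k(M \setminus \Sigma)$ vanishes for $k \geq \ell+1$ with $\ell \leq n-1$ and $H_\ell$ torsion-free, where $\dim M = 2n$. By Poincaré–Lefschetz duality for the $2n$-manifold-with-boundary $M \setminus U$, this homological vanishing translates into vanishing of $H^j(M \setminus U, \partial U)$ for $j \leq 2n-\ell-1$, i.e. in a top range of degrees. Plugging this into the long exact sequence of $(M \setminus U, \partial U)$ — equivalently the Gysin sequence of the circle bundle $\partial U \to \Sigma$ — forces the Euler class map $\smile e(\nu) \colon H^{j}(\Sigma) \to H^{j+2}(\Sigma)$ to be surjective (indeed an isomorphism onto torsion-free quotients) in the top degrees of $\Sigma$, in particular in degree $2n-2 = \dim \Sigma$. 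I expect that surjectivity of cup product with $e(\nu)$ in top degree, combined with $e(\nu) = d \cdot ([\omega]|_\Sigma)$ modulo torsion, forces $d$ to divide $1$ after passing to $H^{2n-2}(\Sigma)/\mathrm{torsion} \cong \Z$; this is exactly the indivisibility assertion for $d[\omega]/\mathrm{torsion}$.

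The main obstacle I anticipate is bookkeeping the torsion and the precise degree ranges so that the duality translation lands on the single cohomological degree of $\Sigma$ where the Euler-class multiplication is literally multiplication by the integer $d$ on $\Z = H^{2n-2}(\Sigma)/\mathrm{torsion}$. One must be careful that ``homologically subcritical'' with a possibly small $\ell$ still guarantees vanishing in the one degree that matters, and that the torsion-freeness of $H_\ell$ is what upgrades a mod-torsion surjection to genuine indivisibility rather than mere divisibility of $d$ by something. Getting the identification $e(\nu) \equiv d[\omega]|_\Sigma$ right (this is where $\mathrm{PD}_M(\Sigma) = d[\omega]$ enters, via $[\omega]|_\Sigma = \mathrm{PD}_\Sigma(e(\nu))/d$ compatibility) and ensuring the maps in the Gysin sequence are exactly cup product with this class, with correct signs and integrality, is the delicate step; everything else is a diagram chase through Lefschetz duality.
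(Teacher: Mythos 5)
Your concluding reduction is fine: if cup product with $e(\nu)=i^*(d[\omega])$ were surjective onto $H^{2n-2}(\Sigma)/\mathrm{torsion}\cong\Z$, Poincar\'e duality on $\Sigma$ would give a class $A\in H_2(\Sigma)$ with $\langle d[\omega],i_*A\rangle=1$, and indivisibility of $d[\omega]$ mod torsion would follow. The genuine gap is the claimed surjectivity itself. In the Gysin sequence of the normal circle bundle $\pi\co\partial(\nu\Sigma)\rightarrow\Sigma$, the cokernel of $\cup\,e(\nu)\co H^{2n-4}(\Sigma)\rightarrow H^{2n-2}(\Sigma)$ is isomorphic to the image of $\pi^*$ in $H^{2n-2}(\partial(\nu\Sigma))\cong H_1(\partial(\nu\Sigma))$, and the exact sequence of the pair $(M\setminus\nu\Sigma,\partial(\nu\Sigma))$ together with Lefschetz duality identifies $H_1(\partial(\nu\Sigma))$ with $H_1(M\setminus\Sigma)$ under the hypotheses. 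But homological subcriticality only kills $H_k(M\setminus\Sigma)$ for $k\geq\ell+1$ --- equivalently $H^j(M\setminus\nu\Sigma,\partial(\nu\Sigma))$ for $j\leq 2n-\ell-1$, which is a \emph{bottom} range of degrees, not a top one as you assert --- and places no constraint whatsoever on $H_1(M\setminus\Sigma)$. Worse, the obstruction you would need to kill is exactly the meridian class in $H_1(M\setminus\Sigma)$, and its vanishing is \emph{equivalent} to the conclusion of the theorem (the cokernel of $\langle d[\omega],-\rangle\co H_2(M)\rightarrow\Z$ is generated by the meridian); so the top-degree Gysin argument is circular rather than merely incomplete.

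The repair is to run the argument in the middle degrees of $M$ instead of the top degree of $\Sigma$, which is what the paper does. The composition $\PKW\co H^k(M)\stackrel{i^*}{\rightarrow}H^k(\Sigma)\stackrel{\PDS}{\rightarrow}H_{2n-2-k}(\Sigma)\stackrel{i_*}{\rightarrow}H_{2n-2-k}(M)\stackrel{\PDM^{-1}}{\rightarrow}H^{k+2}(M)$ equals $\cup\,d[\omega]$, and subcriticality makes \emph{both} $i^*$ and $i_*$ surjective precisely in the range $\ell-1\leq k\leq 2n-\ell-2$, which for $\ell=n-1$ contains an even $k=2m$ with $n-2\leq 2m\leq n-1$. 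Since $[\omega]^{m+1}$ has infinite order in $H^{2m+2}(M)$, the target has nontrivial free part, and surjectivity of $\cup\,d[\omega]$ onto it rules out divisibility of $d[\omega]$ mod torsion. Note that two separate surjectivity statements (restriction in low cohomological degrees, pushforward in high homological degrees) are needed, which is why the argument only closes near the middle dimension; your version uses only the normal-bundle half and therefore lands outside the controlled range.
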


Our proof is devoid of any sophisticated machinery.
The assumption on $(M,\Sigma)$ to be homologically subcritical
guarantees the surjectivity of a certain
homomorphism in homology described by
Kulkarni and Wood~\cite{kuwo80}; this implies the claimed indivisibility.
\section{The Kulkarni--Wood homomorphism}
We consider a pair $(M,\Sigma)$ consisting of a closed, connected,
oriented manifold $M$ of dimension~$2n$, and a compact, oriented
hypersurface $\Sigma\subset M$ of codimension~$2$. No symplectic
assumptions are required in this section.

Write $i\co\Sigma\rightarrow M$ for the inclusion map.
The Poincar\'e duality isomorphisms on $M$ and $\Sigma$
from cohomology to homology,
given by capping with the fundamental class, are denoted by
$\PDM$ and $\PDS$, respectively.

In their study of the topology of complex hypersurfaces,
Kulkarni and Wood~\cite{kuwo80} used the following composition,
which we call the \emph{Kulkarni--Wood homomorphism}:
\[ \PKW\co H^k(M) \stackrel{i^*}{\longrightarrow}
H^k(\Sigma) \stackrel{\PDS}{\longrightarrow}
H_{2n-2-k}(\Sigma) \stackrel{i_*}{\longrightarrow}
H_{2n-2-k}(M) \stackrel{\PDM^{-1}}{\longrightarrow}
H^{k+2}(M).\] 

\begin{lem}
\label{lem:KW}
The Kulkarni--Wood homomorphism equals the cup product with
the cohomology class $\sigma:=\PDM^{-1}(i_*[\Sigma])\in H^2(M)$.
\end{lem}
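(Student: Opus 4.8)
The plan is to unwind the definition of $\PKW$ one arrow at a time and to recognise the resulting composition as cup product with $\sigma$ by invoking two standard identities of the cap product. First I would fix a class $\alpha\in H^k(M)$ and compute the effect of the first two maps, obtaining $\PDS(i^*\alpha)=i^*\alpha\cap[\Sigma]$, the cap product of $i^*\alpha$ against the fundamental class of $\Sigma$. Pushing this forward by $i_*$, the decisive tool is the projection (push–pull) formula $i_*(i^*\alpha\cap x)=\alpha\cap i_*x$, valid for the inclusion $i\co\Sigma\rightarrow M$, any $\alpha\in H^*(M)$, and any $x\in H_*(\Sigma)$. Taking $x=[\Sigma]$ this yields $i_*\PDS(i^*\alpha)=\alpha\cap i_*[\Sigma]$.

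Next I would substitute $i_*[\Sigma]=\PDM(\sigma)=\sigma\cap[M]$, which is precisely the defining property of $\sigma=\PDM^{-1}(i_*[\Sigma])$. The second identity I would use is the associativity relation $(\alpha\cup\sigma)\cap[M]=\alpha\cap(\sigma\cap[M])$ relating cup and cap products. Combining the two gives $\alpha\cap i_*[\Sigma]=(\alpha\cup\sigma)\cap[M]=\PDM(\alpha\cup\sigma)$, and applying $\PDM^{-1}$ shows $\PKW(\alpha)=\alpha\cup\sigma$. Since $\sigma$ has even degree~$2$, graded commutativity gives $\alpha\cup\sigma=\sigma\cup\alpha$, which is the assertion.

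I expect no serious obstacle here: both identities are standard facts of singular (co)homology valid with arbitrary coefficients, so the argument is entirely formal. The only point requiring care is the bookkeeping of conventions — the precise form of the projection formula and of the cap–cup associativity, together with the graded sign in the commutativity $\alpha\cup\sigma=(-1)^{2k}\sigma\cup\alpha$. As $\deg\sigma=2$ this sign is trivial, so once the two identities are cited in a consistent convention the equality $\PKW=\sigma\cup(-)$ follows immediately, independently of $k$ and of the coefficient ring.
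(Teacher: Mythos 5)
Your argument is correct and is essentially identical to the paper's proof: both unwind the composition, apply the projection formula $i_*(i^*\alpha\cap[\Sigma])=\alpha\cap i_*[\Sigma]$, substitute $i_*[\Sigma]=\sigma\cap[M]$, and finish with the cup--cap associativity $(\alpha\cup\sigma)\cap[M]=\alpha\cap(\sigma\cap[M])$. The closing remark on graded commutativity is harmless but unnecessary.
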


\begin{proof}
For $\alpha\in H^k(M)$ we compute
\[ \begin{array}{rcl}
\PKW(\alpha)
& = & \PDM^{-1}\,i_*\,\PDS\,i^*\alpha
\; = \; \PDM^{-1}\,i_*\bigl(i^*\alpha\cap[\Sigma]\bigr)\\[1mm]
& = & \PDM^{-1}\bigl(\alpha\cap i_*[\Sigma]\bigr)
\; = \; \PDM^{-1}\bigl(\alpha\cap\PDM(\sigma)\bigr)\\[1mm]
& = & \PDM^{-1}\bigl(\alpha\cap(\sigma\cap[M])\bigr) 
\; = \; \PDM^{-1}\bigl((\alpha\cup\sigma)\cap[M]\bigr)\\[1mm]
& = & \alpha\cup\sigma.\qedhere
\end{array}\]
\renewcommand{\qed}{}
\end{proof}

\begin{lem}
\label{lem:surjective}
If the complement $M\setminus\Sigma$ has the homology type
of a CW-complex of dimension $\ell$ for some $\ell\leq n-1$,
then $\PKW\co H^k(M)\rightarrow H^{k+2}(M)$ is surjective 
in the range $\ell-1\leq k\leq 2n-\ell-2$.
\end{lem}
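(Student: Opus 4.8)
The plan is to exploit the way the Kulkarni--Wood homomorphism factors through $\Sigma$. Writing $i_!:=\PDM^{-1}\,i_*\,\PDS\co H^k(\Sigma)\to H^{k+2}(M)$ for the Gysin (umkehr) map, the definition of $\PKW$ reads $\PKW=i_!\circ i^*$. Since $\PDM$ and $\PDS$ are isomorphisms, $i_!$ is surjective if and only if $i_*\co H_{2n-2-k}(\Sigma)\to H_{2n-2-k}(M)$ is surjective. Hence it suffices to prove two one-sided statements in the stated range: that $i^*\co H^k(M)\to H^k(\Sigma)$ is onto for $k\leq 2n-\ell-2$, and that $i_*\co H_{2n-2-k}(\Sigma)\to H_{2n-2-k}(M)$ is onto for $k\geq\ell-1$; composing then yields surjectivity of $\PKW$ for exactly $\ell-1\leq k\leq 2n-\ell-2$. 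Throughout I use that the hypothesis forces $H_j(M\setminus\Sigma)=0$ for $j>\ell$ and, by the universal coefficient theorem together with the torsion-freeness of $H_\ell(M\setminus\Sigma)$, also $H^j(M\setminus\Sigma)=0$ for $j>\ell$.

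The key geometric input is a description of the relative groups of $(M,\Sigma)$ in terms of the complement. I would pass to a closed tubular neighbourhood $N$ of $\Sigma$ and set $M_0:=M\setminus\mathrm{int}(N)$, a compact oriented $2n$-manifold with boundary $\partial M_0=\partial N$. As $\Sigma\hookrightarrow N$ is a deformation retract and $M\setminus\Sigma$ deformation retracts onto $M_0$, one has $H_*(M\setminus\Sigma)\cong H_*(M_0)$ and $H^*(M\setminus\Sigma)\cong H^*(M_0)$. Excision gives $H^*(M,\Sigma)\cong H^*(M,N)\cong H^*(M_0,\partial N)$, and likewise in homology, so Lefschetz duality for $M_0$ supplies
\[
H^{j}(M,\Sigma)\cong H_{2n-j}(M\setminus\Sigma)
\quad\text{and}\quad
H_{j}(M,\Sigma)\cong H^{2n-j}(M\setminus\Sigma).
\]
These isomorphisms translate the vanishing range of the complement into vanishing of the relative groups of $(M,\Sigma)$.

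With this in hand I would run the long exact sequence of the pair $(M,\Sigma)$ twice. In cohomology, surjectivity of $i^*$ in degree $k$ is equivalent to the vanishing of the connecting map out of $H^k(\Sigma)$, for which it is enough that $H^{k+1}(M,\Sigma)\cong H_{2n-k-1}(M\setminus\Sigma)=0$; this holds precisely when $2n-k-1>\ell$, that is, $k\leq 2n-\ell-2$. In homology, surjectivity of $i_*$ in degree $2n-2-k$ follows once $H_{2n-2-k}(M,\Sigma)\cong H^{k+2}(M\setminus\Sigma)=0$, which holds when $k+2>\ell$, that is, $k\geq\ell-1$. Intersecting the two ranges and invoking the factorisation $\PKW=i_!\circ i^*$ delivers the claim.

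The step I expect to require the most care is the bookkeeping in the second paragraph: matching the degrees under excision and Lefschetz duality, and handling the boundary case $k=\ell-1$, where the needed cohomological vanishing $H^{\ell+1}(M\setminus\Sigma)=0$ relies on $H_\ell(M\setminus\Sigma)$ being torsion-free rather than merely on the homology dimension bound. Once those identifications are in place, the two exact-sequence arguments are routine.
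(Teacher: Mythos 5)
Your proposal is correct and follows essentially the same route as the paper: identify the relative (co)homology of $(M,\Sigma)$ with the (co)homology of the complement via excision and Lefschetz duality, deduce the vanishing ranges from the hypothesis (including the universal-coefficient/torsion-freeness point at the boundary case $k=\ell-1$), and feed these into the two long exact sequences to get surjectivity of $i^*$ and $i_*$ separately. The factorisation $\PKW=i_!\circ i^*$ and the degree bookkeeping match the paper's argument exactly.
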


\begin{proof}
Write $\nu\Sigma$ for an open tubular neighbourhood
of $\Sigma$ in~$M$.
By homotopy, excision, duality, and the universal
coefficient theorem we have
\[ \begin{array}{rcl}
H_k(M,\Sigma) & \cong & H_k(M,\nu\Sigma) \; \cong \;
                    H_k(M\setminus\nu\Sigma,\partial(\nu\Sigma))\\[1mm]
 & \cong & H^{2n-k}(M\setminus\nu\Sigma) \; \cong \;
           FH_{2n-k}(M\setminus\Sigma)
           \oplus TH_{2n-k-1}(M\setminus\Sigma),
\end{array}\]
where $F$ and $T$ denotes the free and the torsion part,
respectively. This vanishes for $2n-k-1\geq\ell$, that is,
for $k\leq 2n-\ell-1$. It follows that the homomorphism
$i_*\co H_{2n-2-k}(\Sigma)\rightarrow H_{2n-2-k}(M)$ is surjective
for $2n-2-k\leq 2n-\ell-1$, or $k\geq\ell-1$.

Similarly (or directly by Poincar\'e--Lefschetz duality) we have
\[ H^k(M,\Sigma)\cong H_{2n-k}(M\setminus\Sigma),\]
which vanishes for $2n-k\geq \ell+1$, that is, for $k\leq 2n-\ell-1$.
Hence, the homomorphism $i^*\co H^k(M)\rightarrow H^k(\Sigma)$
is surjective for $k+1\leq 2n-\ell-1$, that is, for $k\leq 2n-\ell-2$.
\end{proof}
\section{Proof of Theorem~\ref{thm:main}}
Under the assumptions of Theorem~\ref{thm:main}, the
homomorphism $\PKW\co H^k(M)\rightarrow H^{k+2}(M)$
is surjective at least in the range $n-2\leq k\leq n-1$;
simply set $\ell=n-1$ in Lemma~\ref{lem:surjective}.
Thus, we can pick an even number $k=2m$ in this range.
The free part of $H^{2m+2}(M)$ is non-trivial, since
this cohomology group contains the element $[\omega]^{m+1}$
of infinite order.

On the other hand, $\PKW$ is given by the cup product
with $d[\omega]$, as shown in Lemma~\ref{lem:KW}.
If $d[\omega]/\mathrm{torsion}$ were divisible, so would
be all elements in the image of $\PKW$ in $H^{2m+2}(M)/\mathrm{torsion}$,
and $\PKW$ would not be surjective.

\begin{rem}
The real Euler class of the circle bundle $\partial(\nu\Sigma)$
equals $d[\omega]_{\mathrm{dR}}$, and the natural Boothby--Wang
contact structure on this bundle has an exact convex filling
by the complement $M\setminus\nu\Sigma$, see
\cite[Lemma~3]{gest10}, \cite[Proposition~5]{giro17}
and \cite[Lemma~2.2]{dili19}.
With \cite[Theorem~1.2]{bgz19} the condition `homologically
subcritical' of Theorem~\ref{thm:main} may be replaced by
assuming the existence of \emph{some} subcritical Stein filling
of this Boothby--Wang contact structure.
\end{rem}

\end{document}